\def\bigast{\mathop{\hbox{\Large $\ast$}}}
\def\ilim{\varprojlim}
\def\inv{^{-1}}
\def\p{\varphi}
\def\e<{\leq _{E}}
\def\ov#1{\ensuremath{\overline {#1}}}
\def\til#1{\ensuremath{\widetilde {#1}}}
\def\malce{\mathop{\hbox{$\bigcirc$\kern-10pt\raise1pt
\hbox{\footnotesize$m$}\kern1.5pt}}}
\def\1sk{^{(1)}}
\def\to{\rightarrow}
\def\data{\ifcase\month\or January\or February \or March\or April\or May
\or June\or July\or August\or September\or October\or November \or
December\fi\space\number\day, \number\year}
\def\Thmname{Theorem}
\def\Propname{Proposition}
\def\Lemmaname{Lemma}
\def\Definitionname{Definition}
\newtheorem{Thm}{\Thmname}[subsection]
\newtheorem{Prop}[Thm]{\Propname}
\newtheorem{Lemma}[Thm]{\Lemmaname}
{\theoremstyle{definition}
\newtheorem{Def}[Thm]{\Definitionname}}
{\theoremstyle{remark}
\newtheorem{Rmk}[Thm]{Remark}}
\newtheorem{Cor}[Thm]{Corollary}
\theoremstyle{remark}
\newtheorem{Case}{Case}}
\numberwithin{equation}{section}
\title{A Wreath Product Approach to Classical Subgroup Theorems}
\author{Luis Ribes\and Benjamin Steinberg}
\address{School of Mathematics and Statistics \\ Carleton University \\
1125 Colonel By Drive\\
Ottawa, Ontario  K1S 5B6 \\
Canada}
\thanks{The authors gratefully acknowledge the support of NSERC}
\email{lribes@math.carleton.ca\and bsteinbg@math.carleton.ca}
\date{November 28, 2008}
\begin{document}
\begin{abstract}
We provide elementary proofs of the Nielsen-Schreier Theorem and the Kurosh Subgroup Theorem via wreath products.  Our proofs are diagrammatic in nature and work simultaneously in the abstract and profinite categories.  A new proof that open subgroups of quasifree profinite groups are quasifree is also given.
\end{abstract}

\maketitle

\section{Introduction}
The purpose of this paper is to provide a conceptual framework for simple algebraic proofs of the classical subgroup theorems from combinatorial group theory: the Nielsen-Schreier Theorem and the Kurosh Theorem.  Our proof of the Nielsen-Schreier Theorem, for instance, could very easily be presented in a first course introducing free groups.  The fundamental idea is to exploit the functoriality of the wreath product in order to reduce these theorems to diagram chasing.  By removing as much as possible the combinatorics on words, we are able to present proofs that also work in the profinite category.  Traditionally, the subgroup theorems for profinite groups are obtained via a reduction to the abstract case; here we prove the abstract and profinite theorems simultaneously.

In addition to proving the classical subgroup theorems, we also give a very simple and natural proof of a result of the first author, Stevenson and Zalesskii~\cite{quasifree} on open subgroups of quasifree profinite groups.

The origins of our approach via wreath products lie in two sources: profinite group theory and profinite semigroup theory.   The genesis of the wreath product technique for subgroup theorems is~\cite{cossey}, where Cossey, Kegel and Kov{\'a}cs used wreath products to prove that closed subgroups of projective profinite groups are again projective.  The usual proofs of this result rely on the Nielsen-Schreier Theorem for abstract free groups or on cohomological techniques, see~\cite[Theorem 7.7.4]{RZbook} for example. The wreath product approach was further developed by Haran to study closed subgroups of free products of profinite groups~\cite{haran}.  Ershov~\cite{ershov} seems to be the first to have attempted to use wreath products to deal with subgroup theorems for discrete groups.  In particular, he gives a proof of the Kurosh Theorem using Haran's notion of a projective family.  However, his proof is not conceptually appealing since it follows this route of projective families, and moreover it relies on the Nielsen-Schreier Theorem, which normally should be deducible as a special case of the Kurosh Theorem.

The same wreath product techniques arose independently in the work of semigroup theorists investigating the structure of free profinite monoids.  Wreath products were first introduced into semigroup theory by Sch\"utz\-en\-ber\-ger~\cite{Schutzmonomial} and came to play a major role in the subject with the advent of the Krohn-Rhodes Theorem~\cite{PDT}, which definitively established the wreath product as the principal instrument for decomposing semigroups into simpler parts; see Eilenberg's book~\cite{Eilenberg} or~\cite{qtheor} for details.   There is no Nielsen-Schreier Theorem for free monoids; also cohomological techniques do not work well for semigroups because the Eckmann-Shapiro Lemma fails in this context.  Semigroup theorists were then naturally led to the wreath product to prove structural results about free profinite monoids.   Margolis, Sapir and Weil~\cite{MSWj} first exploited this technique in order to show that those finitely generated clopen submonoids of a free profinite monoid that have any chance to be free are indeed free; this was extended to the non-finitely generated case by Almeida and the second author~\cite{clopen}.  Rhodes and the second author rediscovered the proof from~\cite{cossey} that closed subgroups of projective profinite groups are projective and used an analogous argument to establish that closed subgroups of free profinite monoids are projective profinite groups~\cite{projective}; see also~\cite{minimalideal} where a similar idea was used.

We soon came to realize that the theorems for abstract groups should also be amenable to these techniques, leading to the current paper.  The paper is organized as follows.  The first section sets up our notation for wreath products and establishes the basic functorial properties of this construction.  Next we turn to the Nielsen-Schreier Theorem, which is proved for abstract groups and then adapted to profinite groups.    The Nielsen-Schreier Theorem is followed up by the Kurosh Theorem, which is the most technical part of the paper.  The paper closes with a proof that open subgroups of quasifree profinite groups are again quasifree.  The aim of this paper is to be a self-contained and elementary exposition, so many well-known results are included.

\section{Notation and Conventions}\label{section1}
If $K$ and $L$ are groups, $K\le L$ indicates that $K$ is a subgroup of $L$.
Composition of maps in this paper is always assumed to be right-to-left, except when dealing with permutations in a symmetric group
$S_\Sigma$, which we multiply left-to-right.  If $x, y\in L$, we define $x^y= y^{-1}xy$   and $K^y= y^{-1}Ky$.  The inner automorphism $\mathrm{inn}_y $ of $L$ determined by $y$ is the automorphism $x\mapsto yxy^{-1}$  $(x\in L)$.

%\bigskip
\subsection{The Semidirect Product}
%\medskip
%\noindent
Recall that a group $G$ is said to act  on a group $R$ on the left, if
there exists a homomorphism
$\alpha\colon  G\rightarrow {\rm Aut}(R)$
denoted by $g\mapsto \alpha_g$ ($g\in G$).

Equivalently, $G$ acts on $R$ on the left if there
is a function $G\times R\rightarrow R$ denoted
by
$(x,r) \mapsto {}^x\!{r}$, such that
\begin{itemize}
\item [(a)] ${}^1\!{r} =r$, $\forall  r\in R$,

\item [(b)] ${}^{xy}\!{r}=  {}^x\!{({}^y\!{r})} $, $\forall  r\in R$,
$x,y\in G$,

\item [(c)]  ${}^x\!{(r_1r_2)}=  {}^x\!{r_1}{}^x\!{r_2}$, $\forall  r_1,
r_2\in R$,
$x\in G$.
\end{itemize}
Indeed, just define ${}^x\!{r} = \alpha_x(r)$.
%\smallskip

Given such an action, define the corresponding
semidirect product $R\rtimes G$
to be the group with underlying set $R\times G$ and
multiplication given by
\[(r,g)(r_1,g_1)= (r({}^g\!{r_1}), gg_1)\quad (r,r_1\in R, \ g,g_1\in G).\]
One checks that indeed this multiplication makes
$R\rtimes G$ into a group with identity element $(1,1)$. Note that
\[(r,g)^{-1}= ({}^{g^{-1}}\!{(r^{-1})} , g^{-1}),\quad
  (1,g)  (r,1)(1,g)^{-1} = ({}^g\!{r} ,1)\]
Moreover, the maps
\begin{equation*}
R\rightarrow R\rtimes G\quad r\mapsto (r,1)
\ \ (r\in R)\quad \text{and}\quad
G\rightarrow
R\rtimes G\quad g\mapsto (1,g)
\ \ (g\in G)
\end{equation*}
are injective homomorphisms. If we identify $R$ and
$G$ with their images under these injections, we have
$R\rtimes G=RG$, with $R\cap G=1$ and
$R\lhd R\rtimes G$. When using this identification   we sometimes write the elements of $R\rtimes G=RG$ as $r\cdot g$
($r\in R$, $g\in G$).  Throughout the paper we use the notation $(r, g)$ or $r\cdot g$ for an element of $R\rtimes G$, according to
convenience.
%\bigskip
\subsection{Permutational Wreath Products}
%\medskip

%\noindent
Fix a set $\Sigma$.
Given a  group $A$, define  $A^\Sigma$
  to be the group of all functions $f\colon \Sigma\rightarrow
A$. We write the argument of such a function $f$ on
its right; thus  the operation on $A^\Sigma$ is
given by \[(f g)(s)= f(s)g(s) \quad (f,g\in A^\Sigma, s\in
\Sigma).\]
We denote by
$\delta\colon  A\rightarrow A^\Sigma$
 the {\it diagonal homomorphism}: it assigns to $a\in A$, the constant
function $\delta_a\in A^\Sigma$ defined by $\delta_a(s)= a$, for all $s\in \Sigma$. The image of $\delta$ is denoted
$\delta_A$.

Assume that a group $G$ acts on $\Sigma$ on the right. Define the {\it permutational wreath product} $A\wr G$  (with respect to the
$G$-set
$\Sigma$) to be the semidirect product
\[A\wr G= A^\Sigma \rtimes G,\]
where the action of $G$ on $A^\Sigma$ is defined by
\[{}^g\!{f}(s)= f(sg)\quad (g\in G , f\in A^\Sigma,
s\in \Sigma).\]
The usage of left exponentiation follows Eilenberg~\cite{Eilenberg}.
Observe that $G$  centralizes $\delta_A$ in $A\wr G$, so that $\langle \delta_A, G\rangle= \delta_A\times G$.

%\bigskip

\subsubsection{Elementary Properties}
%\medskip
Several fundamental properties of the wreath product are recorded in the following proposition.

\begin{Prop}\label{elementarypropsofwreath}
{}\
\begin{itemize}
\item[(a)] If $B\le A$ are groups, and $H\le G$, then
\[B\wr H= B^\Sigma\rtimes H\le A\wr G=
A^\Sigma\rtimes G\]
\item[(b)] {\it Functoriality on $A$:}   $(-)\wr G$ is a functor, i.e.,
for each homomorphism $\alpha\colon  A\rightarrow B$,
there is a homomorphism
\[\alpha\wr G\colon  A\wr G= A^\Sigma\rtimes G\rightarrow B\wr G= B^\Sigma \rtimes G\]
given by $(f, g)\mapsto (\alpha f, g)$ \ $(f\in
A^\Sigma, g\in G)$ so that
\begin{itemize}
\item[(b1)] $\mathrm{id}_A\wr G=
\mathrm{id}_{A\wr G}$, and
\item[(b2)]  if $A\xrightarrow{\, \alpha\,} B\xrightarrow{\, \beta\,} C$ are group homomorphisms, then
$\beta\alpha \wr G= (\beta \wr G)(\alpha \wr G).$
\end{itemize}
\item [(c)] Furthermore, $\alpha\wr G$ is an epimorphism (respectively, monomorphism) if and only if $\alpha$ is an epimorphism (respectively, monomorphism).
\end{itemize}
\end{Prop}
%\bigskip
%{\bf   Ben: One can consider also some kind of  functoriality on $G$:  for every homo $\beta: \tilde G\longrightarrow G$, there is a homo
%$A\wr \tilde G \longrightarrow A\wr G$, such that etc.; one can also study the behaviour under $G$-maps $\Sigma\longrightarrow \Sigma'$.
%However I think this is not the moment, and the usefulness of this is not clear at this stage .}

%\vskip.5cm
\subsection{The Standard Embedding}
%\medskip
%\noindent
 Let $H$ be a subgroup of a group $G$. Let $\Sigma= H\backslash G$ be the
set of all right cosets of $H$ in $G$.
Denote by
$\rho\colon  G\rightarrow S_\Sigma$
the regular representation of $G$ in $S_\Sigma$, i.e., $\rho$ is the
homomorphism defined by $ \rho(g)=\bar g  $  $(g\in G)$, where $\bar g\colon  \Sigma \rightarrow
\Sigma$ is the permutation $Hx \mapsto  Hxg$ \ $(x\in G)$. Note that
\[\ker(\rho)= \bigcap _{x\in G}xH x^{-1} =H_G,\]
 the {\it core} of $H$ in $G$.

Fix a right transversal $T$ of $H$ in $G$, i.e., a complete set of representatives of the right cosets $Hx$
 $(x\in G)$. We denote the representative of $Hx$  in $T$ by either $t_{Hx}$ or $\ov x$, as convenient. Define
$s_T\in G^\Sigma$
as the map that assigns to each right coset of $H$ in $G$ its representative in $T$:
\[s_T (Hx)= t_{Hx}=
\ov x\in T \quad    (x\in G).\]
Consider the
monomorphism of groups $\widetilde{\varphi}\colon  G\rightarrow G\wr \rho(G)$
given by the composition of homomorphisms
 %$$G\buildrel {\delta  \times  \rho}\over \longrightarrow \delta_G\times  (G)\rho\hookrightarrow G\wr (G)\rho\buildrel
% {{}_{T}f^{-1}}\over \longrightarrow  G\wr (G)\rho,$$
\[\xymatrix{G\ar[r]^(.3){\delta  \times  \rho}\ &\delta_G\times  \rho(G)\ \ar@{^{(}->}[r] &G\wr \rho(G)\ar[rr]^{\mathrm{inn}_{S_T}}&&G\wr
\rho(G)}.\]
 Explicitly, if $g\in G$, then
\[\widetilde\varphi(g)= s_{T}(\delta_g\cdot\rho(g))s_{T}^{-1}= f_g\cdot \rho(g),\]
where $f_g\in G^\Sigma$ is defined by
 $f_g= s_{T}\delta_g{}^{\rho(g)}\!{(s_{T}^{-1})}$, i.e.,
\[f _g(Hx)= t_{Hx}g t_{Hxg}^{-1}  \quad (x\in G).\]

%\bigskip
We remark  that $\widetilde\varphi(G)\le H\wr \rho(G)$, because $f_g(Hx)= t_{Hx}g t_{Hxg}^{-1}\in H$ ($x\in G$). Therefore, we have
proved
\begin{Thm}[Embedding Theorem]\label{embeddingtheorem}
Let $H\leq G$ be groups.
\begin{itemize}
\item[(a)] There is an injective homomorphism
$\varphi\colon  G\rightarrow H\wr \rho(G)$
defined by
\[\varphi(g)= f_g\cdot\rho(g)\]
where $f_g\colon \Sigma= H\backslash G\rightarrow H$ is given by $f_g(Hx)= t_{Hx}g t_{Hxg}^{-1}$  ($g, x \in G$).
\item[(b)] $\varphi|_{H}(H)\le H^\Sigma \rtimes \rho(H)=H\wr \rho(H)$.
\end{itemize}
\end{Thm}

We record the following facts for future use; they follow by routine
computation in the wreath product.

\begin{Lemma}\label{ribeslemma} Let $\psi\colon G\rightarrow K\wr \rho(G)$ be a
homomorphism such that
\[\xymatrix{G\ar[rr]^{\psi}\ar[rrd]_{\rho} &&K\wr \rho(G)\ar[d]^{\theta}\\ && \rho(G)}\]
commutes where $\theta$ is the projection. Put $\psi(g)= (\til f_g, \rho(g))$  $(g\in G)$. Then the following hold:
\begin{itemize}
\medskip
\item[(a)]  $\til f_{g_1g_2\cdots g_n}= \til f_{g_1}{}^{\rho(g_1)}\!{\til  f_{g_1}}\cdots{}^{\rho(g_1\cdots g_{n-1})}\!{\til  f_{g_n}}$ for
$g_1,\ldots,g_n\in G$;
\item[(b)] $\til f_{g^{-1}}= ({}^{\rho(g^{-1})}\!{\til f_g})^{-1}=
{}^{\rho(g^{-1})}\!{(\til f_g ^{-1})}$ for $g\in G$.
\end{itemize}
\end{Lemma}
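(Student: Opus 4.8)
The two identities are bookkeeping facts about how the "cocycle coordinate" of a homomorphism into a wreath product behaves under products and inverses, so the natural approach is a direct computation in $K\wr \rho(G)$ using the semidirect product multiplication, together with an induction for part (a). The commutativity of the triangle is exactly what guarantees that the $\rho(G)$-coordinate of $\psi(g)$ is $\rho(g)$, so that writing $\psi(g)=(\til f_g,\rho(g))$ is legitimate and $\til f\colon G\to K^\Sigma$ is well defined; this is the only place the hypothesis is used.

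For part (a), I would first treat the case $n=2$. Since $\psi$ is a homomorphism, $\psi(g_1g_2)=\psi(g_1)\psi(g_2)=(\til f_{g_1},\rho(g_1))(\til f_{g_2},\rho(g_2))$, and by the definition of multiplication in the semidirect product $K^\Sigma\rtimes \rho(G)$ this equals $(\til f_{g_1}\cdot {}^{\rho(g_1)}\til f_{g_2},\ \rho(g_1)\rho(g_2))$. Comparing first coordinates gives $\til f_{g_1g_2}=\til f_{g_1}\,{}^{\rho(g_1)}\til f_{g_2}$. The general case then follows by induction on $n$: apply the $n=2$ case to $g_1\cdots g_{n-1}$ and $g_n$, then apply the inductive hypothesis to rewrite $\til f_{g_1\cdots g_{n-1}}$, and finally use that ${}^{\rho(g_1\cdots g_{n-1})}$ is a homomorphism of $K^\Sigma$ (properties (b) and (c) of the action) to distribute it over the product, together with property (b) of the action, $\rho(g_1\cdots g_{i})=\rho(g_1)\cdots\rho(g_i)$, to collapse the iterated exponents into the stated form. (I note there is an evident typo in the statement: the second factor should read ${}^{\rho(g_1)}\til f_{g_2}$, not ${}^{\rho(g_1)}\til f_{g_1}$; the computation above makes the correct form clear.)

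For part (b), apply part (a) — or just the $n=2$ case — to the product $g\cdot g^{-1}=1$. Since $\psi$ is a homomorphism, $\til f_1$ is the identity of $K^\Sigma$, so $1=\til f_g\cdot {}^{\rho(g)}\til f_{g^{-1}}$, hence ${}^{\rho(g)}\til f_{g^{-1}}=\til f_g^{-1}$. Applying the automorphism ${}^{\rho(g^{-1})}$ to both sides and using property (b) of the action (so that ${}^{\rho(g^{-1})}\,{}^{\rho(g)}$ is the identity) yields $\til f_{g^{-1}}={}^{\rho(g^{-1})}(\til f_g^{-1})$, which equals $({}^{\rho(g^{-1})}\til f_g)^{-1}$ since ${}^{\rho(g^{-1})}$ is a group homomorphism. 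This gives both displayed equalities.

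There is no real obstacle here: everything is a routine unwinding of the definitions, and the only thing requiring a little care is the induction in (a), where one must keep track of the nesting of the left actions and repeatedly invoke that $f\mapsto {}^{\rho(w)}f$ is an automorphism of $K^\Sigma$ so that it commutes with taking products and inverses of functions. I would present (a) with the $n=2$ step written out and the induction indicated, and (b) as a one-line consequence.
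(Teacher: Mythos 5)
Your proof is correct and is precisely the ``routine computation in the wreath product'' that the paper invokes without writing out: the $n=2$ case from the semidirect product multiplication, induction using that $f\mapsto {}^{\rho(w)}\!f$ is an automorphism of $K^\Sigma$, and part (b) from $gg^{-1}=1$. You are also right that the second factor in the displayed formula of (a) contains a typo and should read ${}^{\rho(g_1)}\!\til f_{g_2}$.
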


\begin{Rmk}
 If $H\lhd G$, then $\Sigma$ has the structure of a group that we denote $K$. Identifying $K$ with its
canonical image in $S_\Sigma= S_K$, we have $K= \rho(G)$, so that $\varphi\colon  G\hookrightarrow H\wr K$. This is the so called Kaluznin-Krasner
Theorem: every extension of a group $H$ by a group $K$ can be embedded in $H\wr K$~\cite{KrasnerKal}.  The standard embedding is very closely related to the monomial map~\cite[Chapter 14]{Hallbook} and the theory of induced representations; see~\cite{wielandt} for a detailed discussion.
\end{Rmk}

From now on we shall use the notation  $T= \{t_i\mid i\in I\}$ (if $T$ is
finite, we write $T=\{t_1, \ldots , t_k\}$), and we shall
assume that there is a symbol $1\in I$ such that
$t_1=1$ is the representative of the
 coset $H$, i.e., $t_H=t_1=1$.  Fix $i\in I$.   Then the action of
$H^{t_i}= t_i^{-1}Ht_i$ on $\Sigma= H\backslash
G$ fixes the element $Ht_i\in \Sigma$.   Hence if $A$ is a group and
$f\in A^\Sigma$, one has ${}^{\rho(x)}\!{f} (Ht_i)=f(Ht_i)$,
for all $x\in H^{t_i}$. Therefore, the copy  \[\{f(Ht_i)\mid f\in
H^\Sigma\}\cong A\] of the group $A$ corresponding to the
$Ht_i\in \Sigma$ component of the direct product $A^\Sigma$ centralizes
$\rho(H^{t_i})$  in $A\wr \rho(H^{t_i})$. Thus \[A\wr
\rho(H^{t_i})= A^\Sigma \rtimes \rho(H^{t_i})= A\times
(A^{\Sigma-\{Ht_i\}}\rtimes \rho(H^{t_i})).\] We denote by
$\pi_{A,i}\colon  A\wr \rho(H^{t_i})\rightarrow A$ the corresponding
projection: \[\pi_{A,i}(f\cdot\rho(x))= f(Ht_i)\quad  (x\in
H^{t_i}, f\in A^\Sigma).\]  The case $i=1$ will be used so often, that it
is convenient to set $\pi_A=\pi_{A,1}$.  Part (b) of the
following lemma expresses the naturality of $\pi_{A,i}$.

\begin{Lemma}\label{naturality}
We continue with the  above setting. Let  $i\in I$.
\begin{itemize}
\item[(a)] There is a commutative diagram
\[\xymatrix{H^{t_i}\ar[rr]^{\p|_{H^{t_i}}}\ar[rrd]_{\mathrm{inn}_{t_i}|_{H^{t_i}}}
    &&   H\wr \rho(H^{t_i})\ar[d]^{\pi_{H,i}} \\ && H.}\]
In particular, for $i=1$, $\pi_H\varphi|_H = \pi_{H,1}\varphi|_{H}= {\rm
id}_H$.
\item[(b)] If $\alpha\colon  A\rightarrow B$ is a homomorphism of groups,
then the diagram
\[\xymatrix{A\wr \rho(H^{t_i})\ar[d]_{\pi_{A,i}}\ar[rr]^{\alpha \wr
\rho(H^{t_i})}&&B\wr \rho(H^{t_i})\ar[d]^{\pi_{B,i}}\\
A\ar[rr]_\alpha&&B}\]  commutes.
\item[(c)]  One has $\bigcap_{i\in I}(A\wr \rho(H^{t_i})) = A\wr \rho(H_G) =
A^{\Sigma}$, where $H_G$ is the core of $H$.
 The restriction $(\pi_{A,i})|_{A^{\Sigma}}\colon A^{\Sigma}\rightarrow A$
is the usual direct product projection.
\end{itemize}
\end{Lemma}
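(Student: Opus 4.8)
The plan is to verify all three parts by direct computation in the wreath product, relying on the explicit formula $\varphi(g)=f_g\cdot\rho(g)$ with $f_g(Hx)=t_{Hx}\,g\,t_{Hxg}\inv$ supplied by Theorem~\ref{embeddingtheorem}, and on the functoriality recorded in Proposition~\ref{elementarypropsofwreath}.

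For part (a) I would first note that $\varphi$ restricts to a map $H^{t_i}\to H\wr\rho(H^{t_i})$, since $\varphi(G)\le H\wr\rho(G)$ by Theorem~\ref{embeddingtheorem} and $\rho(H^{t_i})\le\rho(G)$. Then for $x\in H^{t_i}=t_i\inv Ht_i$ I would compute $\pi_{H,i}(\varphi(x))=f_x(Ht_i)=t_{Ht_i}\,x\,t_{Ht_ix}\inv$. The key observation is that $t_{Ht_i}=t_i$, and that since $t_ixt_i\inv\in H$ the coset $Ht_ix$ coincides with $Ht_i$, so also $t_{Ht_ix}=t_i$; this collapses the expression to $t_ixt_i\inv=\mathrm{inn}_{t_i}(x)$, which is exactly the claimed commutativity. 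Specializing to $i=1$, where $t_1=1$, then yields $\pi_H\varphi|_H=\mathrm{id}_H$.

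For part (b) I would use that $\alpha\wr\rho(H^{t_i})$ sends $f\cdot\rho(x)$ to $(\alpha f)\cdot\rho(x)$ by Proposition~\ref{elementarypropsofwreath}(b), so that going around the square either way sends $f\cdot\rho(x)$ to $\alpha(f(Ht_i))$; commutativity is then immediate from the definitions of $\pi_{A,i}$ and $\pi_{B,i}$. For part (c) I would write $A\wr\rho(H^{t_i})=A^{\Sigma}\rtimes\rho(H^{t_i})$ and note that all of these subgroups of $A\wr\rho(G)$ contain the common normal subgroup $A^{\Sigma}$, so a pair $(f,g)$ lies in the intersection if and only if $g\in\bigcap_{i\in I}\rho(H^{t_i})$. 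Since $T$ is a transversal, every $g\in G$ equals $ht_i$ for some $h\in H$, hence $H^g=H^{t_i}$, and therefore $\bigcap_{i}H^{t_i}=\bigcap_{g\in G}H^g=H_G$; because $\ker\rho=H_G$ is contained in each $H^{t_i}$, taking $\rho$-preimages gives $\bigcap_i\rho(H^{t_i})=\rho(H_G)=\{\mathrm{id}_{\Sigma}\}$. Thus the intersection is $A^{\Sigma}\rtimes\{\mathrm{id}_{\Sigma}\}=A^{\Sigma}=A\wr\rho(H_G)$, and the restriction of $\pi_{A,i}$ to $A^{\Sigma}$ is $f\mapsto f(Ht_i)$, which is the usual coordinate projection.

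I do not expect a genuine obstacle here; the only point demanding attention is the observation in part (a) that the $H^{t_i}$-action on $\Sigma$ fixes $Ht_i$, which forces $t_{Ht_ix}=t_i$ and identifies $\pi_{H,i}\varphi$ on $H^{t_i}$ with conjugation by $t_i$. Everything else is routine bookkeeping in the semidirect and wreath products.
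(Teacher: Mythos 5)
Your proposal is correct and follows essentially the same route as the paper: part (a) by the direct computation $\pi_{H,i}\varphi(x)=t_{Ht_i}xt_{Ht_ix}^{-1}=t_ixt_i^{-1}$ using that $H^{t_i}$ stabilizes $Ht_i$, part (b) directly from the definitions, and part (c) from $\bigcap_{i\in I}H^{t_i}=H_G=\ker\rho$. You merely supply more of the routine details (e.g.\ why $t_{Ht_ix}=t_i$ and why the intersection of the $H^{t_i}$ equals the core) than the paper's terse proof does.
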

\begin{proof}
To prove (a)   observe that,  for $r\in H^{t_i}$, one has
\[\pi_{H,i}\varphi (r)=   f_r(Ht_i)= t_{Ht_i}rt_{Ht_ir}^{-1}=
t_irt_i^{-1}\] since $H^{t_i}$ stabilizes $Ht_i$.  The proof of (b)
follows directly from the definitions of $\pi_{A,i}$,
 $\pi_{B,i}$, and $\alpha \wr \rho(H^{t_i})$.  Part (c) is clear,  as
$\bigcap_{i\in I} H^{t_i}=H_G= \ker \rho$.
\end{proof}

\subsection{The Embedding Theorem for Profinite Groups}\label{profiniteversion}
By a \emph{variety} of finite groups we mean a nonempty class $\mathscr C$ of finite groups closed under
taking subgroups, finite direct products and homomorphic images. In this paper, we assume  in addition
that the variety
$\mathscr C$ is closed under extensions of groups  (we say then that $\mathscr C$ is an \emph{extension closed} variety of finite groups).  A pro-$\mathscr C$ group is  a profinite group whose continuous finite quotients are in $\mathscr C$, i.e., an inverse limit  of groups in $\mathscr C$.  Suppose now that $G$ is a pro-$\mathscr C$  group and $H$ is an open subgroup of $G$ (cf.~\cite{RZbook} for basic properties of profinite groups).  Let $\Sigma = H\backslash G$; then $\Sigma$ is finite, and the quotient topology on $\Sigma$ is discrete.
Let
$\rho\colon G\to S_{\Sigma}$ be as before; since
$H_G=\ker \rho$ is open in $G$, the homomorphism $\rho$ is
continuous.   If
$A$ is any pro-$\mathscr C$  group, then $A^{\Sigma}$ is a pro-$\mathscr C$  group and the left action $\rho(G)\times
A^{\Sigma}
 \to A^{\Sigma}$, as defined above, is continuous since  $\Sigma$ and $\rho(G)$ are finite and the action just permutes the coordinates.
  Thus the wreath product $A\wr \rho(G) = A^{\Sigma}\rtimes
\rho(G)$ is a pro-$\mathscr C$  group (here we use that $\mathscr C$ is extension closed).  Moreover, if $T$ is a
transversal for
$\Sigma$, then it follows immediately from the definition that the standard embedding $\varphi\colon
G\rightarrow H\wr \rho (G)$  is continuous.

\section{The Nielsen-Schreier Theorem}
We present an elementary proof of the Nielsen-Schreier Theorem, stating that subgroups of free groups are free, using wreath products. Our proof is algebraic in nature, rather than combinatorial, and proceeds by direct verification of the universal property.  Let $F$ be a free group on $X$ and $H$ a subgroup.  Elements of $F$ can be viewed as reduced words over $X\cup X\inv$~\cite{LyndonandSchupp}.

\subsection{Schreier Transversals}
A \emph{Schreier transversal} for $H\leq F$ is a right transversal $T$ of $H$ in $F$ that is closed under taking prefixes (and in particular contains the empty word): if $y_1\cdots y_i\cdots y_n\in T$ with $y_1,\ldots,y_n\in X\cup X\inv$ in reduced form, then $y_1\cdots y_i\in T$, for all $i=0,\ldots,n-1$.  The existence of Schreier transversals is a standard exercise in Zorn's Lemma.

\begin{Lemma}
There exists a Schreier transversal $T$ of $H$ in $F$.
\end{Lemma}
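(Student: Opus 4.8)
The plan is to obtain $T$ as a maximal element of a suitable poset and then argue, by induction on the ``coset length'', that maximality forces $T$ to meet every right coset. Let $\mathcal{P}$ be the set of all subsets $P\subseteq F$ consisting of reduced words such that $P$ is closed under taking prefixes and such that distinct elements of $P$ lie in distinct right cosets of $H$; call such a $P$ a partial Schreier transversal, and order $\mathcal{P}$ by inclusion. First I would check that $\mathcal{P}$ is nonempty ($\varnothing\in\mathcal{P}$, and $\{1\}\in\mathcal{P}$) and that the union of a chain in $\mathcal{P}$ again lies in $\mathcal{P}$ --- both prefix-closedness and the coset-injectivity condition are witnessed inside a single member of the chain. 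Zorn's Lemma then yields a maximal $T\in\mathcal{P}$. Since $\varnothing$ is not maximal, $T\neq\varnothing$, and being prefix-closed and nonempty it contains the empty word $1$; thus $T$ meets the coset $H$.

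It then remains to show that $T$ meets every right coset $Hg$: since $T$ is coset-injective this makes $T$ a transversal, and it is prefix-closed by construction. I would prove this by induction on the minimal length of a reduced word in $Hg$, the base case $Hg=H$ being already done. For the inductive step, pick a shortest reduced word $w\in Hg$ and write $w=w'y$ with $y\in X\cup X\inv$ and $w'$ the prefix obtained by deleting the last letter; then $Hw'$ has strictly smaller minimal length, so by induction there is an honest element $t\in T\cap Hw'$. Now consider the group element $ty\in Hw'y=Hg$. If $t$ ends in $y\inv$, write $t=t_0y\inv$ in reduced form; then $t_0$ is a prefix of $t$, hence $t_0\in T$, and $t_0=ty\in Hg$, so $Hg$ is covered. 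If $t$ does not end in $y\inv$, then $ty$ is already reduced and all its proper prefixes are prefixes of $t$, hence lie in $T$; thus $T\cup\{ty\}$ is again prefix-closed, and either $ty$ already lies in $T$ (so $Hg$ is covered) or $T\cup\{ty\}$ properly extends $T$, which by maximality cannot belong to $\mathcal{P}$, forcing a coset-injectivity failure --- and the only possible one says precisely that $Hg$ contains an element of $T$. In all cases $Hg$ meets $T$, completing the induction.

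The routine verifications --- that $\mathcal{P}$ is closed under unions of chains, that $1\in T$, that the reductions of $ty$ behave as claimed --- are straightforward. The one point that requires care, and which I expect to be the main obstacle, is the inductive step: it is crucial to extract from the inductive hypothesis a genuine element $t$ of $T$ lying in $Hw'$ (not merely the abstract information that $Hw'$ is ``reached''), since it is exactly the dichotomy on whether $t$ ends in $y\inv$ that allows one either to locate a $T$-representative of $Hg$ directly or to build a prefix-closed extension of $T$ and invoke maximality.
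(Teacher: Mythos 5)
Your proof is correct and follows essentially the same route as the paper: Zorn's Lemma applied to the poset of prefix-closed, coset-injective sets of reduced words, followed by a shortest-word argument with the same dichotomy on whether the appended letter cancels against the end of the representative $t$. The only difference is organizational --- the paper takes a minimal-length counterexample coset and derives a contradiction, while you run a direct induction and extract the representative from the forced failure of coset-injectivity --- but the mathematical content is identical.
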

\begin{proof}
Consider the collection $\mathscr P$ of all prefix-closed sets of reduced words in $X\cup X^{-1}$ that intersect each right coset of $H$ in at most one element and order $\mathscr P$ by inclusion.  Then $\{1\}\in \mathscr P$, so it is non-empty.  It is also clear that the union of a chain of elements from $\mathscr P$ is again in $\mathscr P$, so $\mathscr P$ has a maximal element $T$ by Zorn's Lemma.  We need to show that each right coset of $H$ has a representative in $T$.
Suppose this is not the case and choose a minimum length word $w$ so that $Hw\cap T=\emptyset$.  Since $1\in T$, it follows $w\neq 1$ and hence $w=ux$ in reduced form where $x\in X\cup X\inv$.  By assumption on $w$, we have $Hu=Ht$ some $t\in T$.  If $tx$ is reduced as written, then $T\uplus \{tx\}\in \mathscr P$, contradicting the maximality of $T$.  If $tx$ is not reduced as written, then $tx\in T$ by closure of $T$ under prefixes and $Hw = Htx$, contradicting the choice of $w$.  This completes the proof that $T$ is a transversal.
\end{proof}

\subsection{The Nielsen-Schreier Theorem} We now proceed with our proof that subgroups of free groups are free via wreath products.

\begin{Thm}[Nielsen-Schreier]
Subgroups of free groups are free.  More precisely, let $F$ be a free group on $X$ and let $H$ be a subgroup. Let $T$ be a Schreier transversal for $H$ and
\begin{equation}\label{Schreierbasis}
B=  \{tx(\ov {tx})\inv \mid (t,x)\in T\times X, tx(\ov{tx})\inv\neq 1\}.
\end{equation}
 Then $H$ is freely generated by $B$.
\end{Thm}
\begin{proof}
Our goal is to show that any map $\alpha\colon B\to G$ with $G$ a group
extends uniquely to a homomorphism $\gamma\colon H\to G$.
First define an
extension \mbox{$\alpha\colon B\cup \{1\}\to G$} by $\alpha(1)
=1$.
Denote by $\Sigma$ the set
$H\backslash F$ of right cosets of $H$ in $F$ and let
$\rho\colon F\to S_{\Sigma}$ be the associated permutation
representation of $F$.

 To motivate our construction of the extension, we start with a proof of uniqueness.
So let $\gamma\colon H\to G$ be any homomorphism
extending $\alpha$.  Consider  the standard wreath product embedding
$\p\colon F\to  H\wr \rho(F)$ of Theorem~\ref{embeddingtheorem}. The
functoriality of the wreath product and Lemma~\ref{naturality} yield the commutative diagram
%\[\begin{diagram}
%F & \rTo^{\rho} & H\wr \rho(F) & \rTo^{\gamma\wr \rho(F)} & G\wr \rho(F)\\
% \uInto  &                   &\uInto                       &  & 	\uInto\\
% H &  \rTo^{\rho|H}        & H\wr \rho(H)&  \rTo^{\gamma\wr \rho(H)} &G\wr \rho(H)\\
% \dEq & & \dTo_{\pi_H} & &\dTo_{\pi_G}   \\
% H & \rTo^{1_H}& H &\rTo^{\gamma} & G
% \end{diagram}
%\]
\[\xymatrix{F \ar[rr]^{\p} && H\wr \rho(F)  \ar[rr]^{\gamma\wr \rho(F)}& & G\wr \rho(F)\\
 H\ar@{^{(}->}[u]\ar[rr]^{\p|_{H}}\ar[drr]_{\mathrm{id}_H}        && H\wr \rho(H)\ar@{^{(}->}[u]\ar[rr]^{\gamma\wr \rho(H)}\ar[d] ^{\pi_H}&&G\wr \rho(H)\ar@{^{(}->}[u]\ar[d]^{\pi_G} \\
&& H \ar[rr]_{\gamma} && G.}
\]
Hence $\gamma$ is uniquely determined by $(\gamma\wr\rho(F))\p$, which is in turn determined by its values on $X$.  But if $x\in X$, then $(\gamma\wr \rho(F))\p(x) = (\gamma f_x,\rho(x))$. Now recall that  $f_x(Hw) = t_{Hw}xt_{Hwx}^{-1}\in B\cup \{1\}$ and hence $\gamma f_x = \alpha f_x$.  Thus the unique possible extension of $\alpha$ to a homomorphism is given by $\pi_G(\tau|_{H})$ where $\tau\colon F\to G\wr \rho(F)$ is the homomorphism defined on $X$ by $\tau(x) = (\alpha f_x,\rho(x))$.  Let us show $\pi_G(\tau|_{H})$ extends $\alpha$.

Let $b\in B$.  Then $b=tx(\ov {tx})\inv$ some $t\in T$, $x\in X$.  Let us suppose that $t=x_1\cdots x_{k-1}$ and $(\ov{tx})\inv = x_{k+1}\cdots x_n$ in reduced form.  We put $x_k=x$ so that $b=x_1\cdots x_n$, although this product may not be reduced as written.  Set $t_i = \ov{x_1\cdots x_i}$, for $i=0,\ldots,n$.
Using that Schreier transversals are prefix-closed one easily deduces the formulas:
\begin{equation}\label{schreiertransversal}
\begin{split}
t_i& =x_1\cdots x_i \qquad\quad  i<k\\
t_i&=x_n\inv\cdots x_{i+1}\inv\ \quad i>k.
\end{split}
\end{equation}
Indeed, the first formula is clear.  The second follows because, for $i\geq k+1$, $Ht_i = Htxx_{k+1}\cdots x_i=Hx_n\inv\cdots x_{k+1}\inv x_{k+1}\cdots x_i = Hx_n\inv \cdots x_{i+1}\inv$.

Our aim now is to verify $\pi_G\tau(b)=\alpha(b)$.  Put $\tau(r)=(f'_r,\rho(r))$, for $r\in F$.  We claim that if $t_{Hw}xt_{Hwx}\inv=1$ ($x\in X\cup X\inv, w\in F$), then $f'_x(Hw)=1$.  This is immediate if $x\in X$, since $f'_x=\alpha f_x$ and $f_x(Hw)=t_{Hw}xt_{Hwx}\inv$.  Next assume $x\in X\inv$. Hence, taking into account that  $x^{-1}\in X$, \[f'_x(Hw)= (\alpha f_{x^{-1}}(Hwx))^{-1}=
(\alpha(t_{Hwx}x^{-1}t_{Hw}^{-1}))^{-1}=1,\] since $t_{Hwx}x^{-1}t_{Hw}^{-1}=
(t_{Hw}xt_{Hwx}^{-1})^{-1}=1$.

In light of \eqref{schreiertransversal}  it follows that $t_{i-1}x_it_i\inv =1$ for all $i\neq k$.
  Thus by the claim and Lemma~\ref{ribeslemma}
\begin{align*}
\pi_G\tau(b) &=f'_b(H)= f'_{x_1\cdots x_n}(H)\\&= f'_{x_1}(H)f'_{x_2}(Hx_1)\cdots f'_{x_n}(Hx_1\cdots x_{n-1})\\
&= f'_{x_1}(Ht_0)f'_{x_2}(Ht_1)\cdots f'_{x_n}(Ht_{n-1})\\
       &= f'_{x_k}(Ht_{k-1}) = \alpha(t_{k-1}x_kt_k\inv)
        = \alpha(b)
\end{align*}
%\end{equation}
as required.  This completes the
proof that $H$ is freely generated by $B$.
\end{proof}

\begin{Rmk}
Notice that the above proof only shows that $B$ is a basis for $H$.  It does not follow from the proof that $B$ is in bijection with the set of pairs $(t,x)\in T\times X$ so that $tx(\ov{tx})\inv\neq 1$, although this can be deduced by straightforward combinatorial reasoning.
\end{Rmk}

%\begin{Cor}[Schreier's formula]
%Let $F$ be a free group of finite rank $r$ and $H$ a subgroup of finite index. % Then the rank of $H$ is $1+[G:H](r-1)$.
%\end{Cor}

\subsection{The Nielsen-Schreier Theorem for Free Profinite Groups}

Let $X$ be a profinite  space (i.e., a
compact Hausdorff and totally  disconnected topological space).  Then a pro-$\mathscr C$ group $F$ is said
to be a \emph{free pro-$\mathscr C$ group} on
$X$ if there is a continuous map
$\iota\colon X\rightarrow F$ so that if $\sigma\colon X\rightarrow G$ is any continuous map into a
pro-$\mathscr C$ group $G$, then there is a unique continuous homomorphism $\psi\colon F\rightarrow G$ so
that \[\xymatrix{X\ar[r]^{\iota}\ar[rd]_{\sigma} &F\ar@{.>}[d]^{\psi}\\ & G}\]
 commutes.  If $(X,\ast)$ is a pointed profinite space, one defines in an  analogous manner the concept
of free pro-$\mathscr C$ group on $(X,\ast)$:  it satisfies the same universal property as above, but with
all the maps assumed to be  continuous maps of pointed spaces (the maps send distinguished points to
distinguished points; the distinguished point of a group being its identity element). The map $\iota$
is an embedding, and we identify $X$ with its image $\iota(X)$.

  Observe that a free pro-$\mathscr C$ group $F$ on a profinite space $X$ can be viewed as
free pro-$\mathscr C$ group on the pointed profinite space $(X\uplus \{\ast\}, \ast)$ in an obvious way; so
we deal here only with pointed spaces. Let $\Phi$ denote the abstract subgroup of $F$ generated by $X$.
Then (cf.~\cite[Propositions 3.3.13 and 3.3.15]{RZbook}), $\Phi$ is a free abstract group with basis $X-
\{\ast\}$; furthermore $\Phi$ is dense in $F$.

  Let
$H$ be an open subgroup of $F$. Then  the natural map \[(H \cap \Phi)\backslash \Phi \rightarrow
 H\backslash F =\Sigma\] is a bijection.  Choose a  Schreier transversal $T$ of $H\cap\Phi$ in $\Phi$.
The map \[T\times X\rightarrow B= \{tx(\overline {tx})^{-1}\mid t\in T, x\in X\} \subseteq H\le F\]
 given by
$(t,x)\mapsto
 tx(\overline {tx})^{-1}=
 tx(t_{\pi(tx)})^{-1}$  (where $\pi\colon F\rightarrow \Sigma =H\backslash F$
  is the projection) is continuous,
 since $\pi$ and the section  $Hf\mapsto t_{Hf}$ from $H\backslash F$ to
  $F$ are obviously continuous.
 And  so,
  $B$ is closed by the compactness of
  $T\times X$,  i.e., $B$
   is profinite.  Observe that $1\in  B$. We think of $B$ as a pointed space with distinguished point
$1$.   The proof of the
Nielsen-Schreier Theorem that we have presented above now goes through  {\it mutatis mutandi} to show
that
$H$ is a free profinite group on the pointed space $(B, 1)$.  Thus we have:

\begin{Thm} Let $\mathscr C$ be an extension closed variety of finite groups. Let  $F$ be
a free pro-$\mathscr C$ group on a pointed profinite space
$(X, \ast)$ and let
$H$ be an open subgroup of
$F$.  Then $H$ is a free pro-$\mathscr C$ group on a pointed profinite space.
\end{Thm}

\section{The Kurosh Theorem}
In this section, we give what may arguably be considered the first algebraic proof of the Kurosh Theorem on subgroups of free products.  The original proof is essentially combinatorial, while modern proofs have a topological character.  Perhaps, Higgin's proof can also be considered algebraic, but it relies on groupoids~\cite{Higgins}.  Our proof has a similar flavor to the above proof of the Nielsen-Schreier Theorem in that it relies on wreath products.  A key difference is that the transversals used are more complicated.

\subsection{Free Products}
Let $G=\bigast_{\alpha\in A} G_{\alpha}$ be the free product of the groups $G_{\alpha}$, $\alpha\in A$.  We shall freely use the Normal Form Theorem for free products~\cite[Chapter IV]{LyndonandSchupp}, stating that each non-trivial element $g$ of $G$ can be uniquely written in the form $g=g_1g_2\cdots g_m$ where each $g_i$ belongs to some $G_{\alpha}$ and $g_i\in G_{\alpha}$ implies $g_{i+1}\notin G_{\alpha}$, for $i=1,\ldots, m-1$.    The number $m$ will be called the \emph{syllable length} of $g$ and we write $\ell(g)=m$. If $S\subseteq G$, denote by $\ell(S)$ the smallest syllable length of an element of $S$.  By convention, the syllable length of the identity is $0$.  If $g_m\in G_{\alpha}$, then we shall say that $g$ ends in the syllable $\alpha$ or that $\alpha$ is the last syllable of $g$.

\subsection{Kurosh Systems}
Let us begin by setting up notation.  Suppose $G =\bigast_{\alpha\in A}
G_{\alpha}$.   Let $H\leq G$ and set
$\Sigma = H\backslash G$.  Denote by $\rho\colon G\to S_{\Sigma}$ the
action map. Let $\{H_i\mid i\in I\}$, be the right cosets of $H$ and assume
there is a symbol $1\in I$ such that $H_1=H$.  Assume that we have a
transversal
$T_{\alpha}$ of the right cosets of $H$ in $G$  for each $\alpha\in
A$. Denote by $\alpha(H_i)$ the representative of $H_i$ in
$T_{\alpha}$. We require $\alpha(H) = 1$, all $\alpha\in A$.

\begin{Def}[Kurosh System]
A collection $D=\{D_{\alpha}\mid \alpha\in A\}$ of systems
$D_{\alpha}$ of  representatives
$\alpha(HgG_{\alpha})$ of the double
cosets $H\backslash G/G_{\alpha}$, \mbox{$\alpha\in A$}, together with a
system $\{T_{\alpha}\mid \alpha\in A\}$ of transversals for
$H\backslash G$ is called a \emph{Kurosh system} if the following holds:
\begin{itemize}
\item [(i)] If $g = \alpha(HgG_{\alpha})$, then $g = \alpha(Hg)$;
\item [(ii)] $\alpha(HgG_{\alpha})$ is either $1$ or ends in a
syllable $\beta\neq\alpha$;
\item [(iii)] $H_i\subseteq HgG_{\alpha}$ and
$\alpha(HgG_{\alpha})=g$ implies $\alpha(H_i) \in gG_{\alpha}$;
\item [(iv)] If $1\neq g=\alpha(HgG_{\alpha})$ has last syllable in
$G_{\beta}$, then $\beta(Hg)=g$;
\item [(v)] $\ell(\alpha(HgG_\alpha))= \ell
(HgG_\alpha)$.
\end{itemize}
\end{Def}

\begin{Prop}
Kurosh systems exist.
\end{Prop}
\begin{proof}
We proceed by induction on the length of the double cosets
$HgG_\alpha$. If $\ell(HgG_\alpha)=0$, i.e.,
 $HgG_\alpha= HG_\alpha$ choose
$\alpha(HgG_\alpha)=1$ and   $\alpha(H)=1$;  if $H\ne H_i\subseteq
HG_\alpha$, choose $a_\alpha\in G_\alpha$ so that $H_i=Ha_\alpha$, and put
$\alpha(H_i)= a_\alpha$. Then conditions (i)--(v) hold. Let $n>1$,
and assume representatives $\beta(HrG_\beta)$ and $\beta(H_i)$ have been
chosen whenever $H_i\subseteq HrG_\beta$ and $\ell( HrG_\beta)\le
n-1$ $(\beta\in A, r\in G)$, satisfying conditions
(i)--(v). Let
$\ell (HgG_\alpha)= n$ with $\ell(g)=n$. Then $g= \bar g a_\beta$, where $\ell(\bar
g)=n-1$, $1\ne a_\beta\in G_\beta$ and $\beta\ne \alpha$. Since $\ell(HgG_\beta)\leq n-1$, representatives
$\beta(HgG_\beta)=t$ and $\beta (Hg)=tb_\beta$  ($b_\beta\in G_\beta)$ have already been chosen; in particular, $\ell(t)\leq n-1$ by (v). Since
$\ell (Hg)=n$, we deduce that $b_\beta\ne 1$ and $\ell(tb_\beta)=n$. Define $\alpha(HgG_\alpha)=tb_\beta= \alpha(Hg)$, and
whenever
$Hg\ne H_i\subseteq HgG_\alpha$, choose $c_\alpha\in G_\alpha$ so that $H_i=Hgc_\alpha$, and put
$\alpha(H_i)= tb_\beta c_\alpha$. Clearly, conditions (i)--(v) are satisfied.
\end{proof}

Let us define some key elements of $H$.   Fix an index $\alpha_0\in
A$. For $x\in G_{\alpha}$ and $H_i\in
H\backslash G$, define:
\begin{align*}
y_{i,x} &= \alpha(H_i)x\alpha(H_ix)\inv;\\
z_{i,\alpha} &= \alpha(H_i)\alpha_0(H_i)\inv.
\end{align*}
 It is immediate that $y_{i,x}, z_{i,\alpha}\in H$ all $i$, $x$ and
$\alpha$.  Notice that $z_{1,\alpha}=1=z_{i,\alpha_0}$ for all
$\alpha\in A$, $i\in I$.  If $H_i= Hg$, we often write $y_{Hg,x}$
and $z_{Hg,\alpha}$ for $y_{i,x}$ and $z_{i,\alpha}$.
 We begin with some simple observations concerning these
elements.

\begin{Prop}\label{relatortypes}
Retaining the above notation, we have:
\begin{enumerate}
\item If $x_1,x_2\in G_{\alpha}$, then $y_{i,x_1}y_{j,x_2} =
y_{i,x_1x_2}$ where $H_ix_1=H_j$;
\item If $x\in G_{\alpha}$, $H_i\subseteq HuG_{\alpha}$ with
$u=\alpha(HuG_{\alpha})$, then $y_{i,x}\in uG_{\alpha}u\inv\cap H$;
\item If $h\in uG_{\alpha}u\inv\cap H$ with $u=\alpha(HuG_{\alpha})$, then
$h=y_{Hu,x}$ for some $x\in G_{\alpha}$;
\item If $1\neq u=\alpha(HuG_{\alpha})$ ends with a
$\beta$-syllable, then $z_{Hu,\alpha}=z_{Hu,\beta}$.
\end{enumerate}
\end{Prop}
\begin{proof}
First we handle (1).  Straightforward computation yields
\[y_{i,x_1}y_{j,x_2}
=\alpha(H_i)x_1\alpha(H_ix_1)\inv\alpha(H_ix_1)x_2\alpha(H_ix_1x_2)\inv
= y_{i,x_1x_2}.\] Next we turn to (2).
 By condition (iii) of a Kurosh system, $\alpha(H_i)=ug$ and $\alpha(H_ix)=ug'$ some $g,g'\in
G_{\alpha}$, whence \mbox{$y_{i,x} = ugx(ug')\inv \in
uG_{\alpha}u\inv \cap H$}. To prove (3), suppose $h = uxu\inv$ with
$x\in G_{\alpha}$. Then $Hu=Hux$ and $\alpha(Hu)=u$ by (i). We
conclude $y_{Hu,x} = \alpha(Hu)x\alpha(Hux)\inv = uxu\inv =h$. For
(4) we simply observe $\alpha(Hu) = u=\beta(Hu)$ by (i) and (iv).
\end{proof}

Set  $Z= \{z_{i,\alpha}\mid i\in I, \alpha\in A, z_{i,\alpha}\neq 1\}$ and
$F=\langle Z\rangle$.

\subsection{The Kurosh Theorem}
Our goal is to prove \[H = \bigast_{\alpha\in A}\left[\bigast_{u\in D_{\alpha}}
\left(uG_{\alpha}u\inv\cap H\right)\right] \mathrel{\ast} F\] and $F$ is freely
generated by $Z$. We use wreath products and the universal property
to effect this proof.  From now on we work with a fixed Kurosh
system.  If $\psi\colon Z\to K$ is a map, with $K$ a group, then we extend
$\psi$ to $Z\cup \{1\}$ by setting $\psi(1) =1$.

\begin{Prop}\label{existenceprop}
Given a family  $\mathscr F=\{\psi_u\colon uG_{\alpha}u\inv\cap H\to
K\}_{\alpha\in A, u\in D_{\alpha}}$ of
group homomorphisms and a map $\psi\colon Z\to K$,  there exists, for
each $\alpha\in A$, a homomorphism $\Psi_{\alpha}\colon G_{\alpha}\to
K\wr \rho(G)$ defined by $\Psi_{\alpha}(x)
= (f_x,\rho(x))$ with
\begin{equation*}
f_x(H_i)= \psi(z_{i,\alpha})\inv \psi_u(y_{i,x})\psi(z_{j,\alpha})
\end{equation*}
where $H_ix=H_j$ and $u=\alpha(H_iG_{\alpha})$.  If
\mbox{$\Psi\colon G\to K\wr \rho(G)$} denotes the induced
homomorphism, then the following diagram commutes
\begin{equation}\label{firstcommute}
%\begin{diagram} G & \rTo^{\Psi} & K\wr G\sigma\\
%                    & \rdTo_{\sigma} &\dTo_{\pi}\\
%                    &  & G\sigma
%\end{diagram}
\xymatrix{G\ar[rr]^{\Psi}\ar[rrd]_{\rho} && K\wr \rho(G)\ar[d]^{\theta}\\
                  & & \rho(G)}
\end{equation}
where $\theta$ is the projection.

Moreover, the construction of $\Psi$ is functorial in the sense that
given another family of homomorphisms $\mathscr F'=\{\psi_u'\colon uG_{\alpha}u\inv\cap
H\to K'\}_{\alpha\in A, u\in D_{\alpha}}$, a map \mbox{$\psi'\colon Z\to K'$} and a homomorphism
$\gamma\colon K'\to K$ such that the diagrams
\begin{equation}\label{firstcommuteb}
%\begin{diagram} &              & uG_{\alpha}u\inv \cap H & &\\
%                  &\ldTo^{\p_{u}'}&                         &\rdTo^{\p_{u}} &\\
%                  K'& &\rTo_{\gamma} & &K\end{diagram}\quad \text{and} \quad \begin{diagram} & & Z & &\\
%                  &\ldTo^{\p'}&&\rdTo^{\p} &\\
%                  K'& &\rTo_{\gamma} & &K\end{diagram}
\xymatrix{&\ar[ld]_{\psi_{u}'} uG_{\alpha}u\inv \cap H\ar[rd]^{\psi_{u}} & &\\
                  K'\ar[rr]_{\gamma} & &K}\quad \xymatrix{& \ar[ld]_{\psi'} Z\ar[rd]^{\psi} & &\\
                  K'\ar[rr]_{\gamma} & &K}
\end{equation}
commute,  then the following diagram commutes
\begin{equation}\label{secondcommute}
%\begin{diagram}
%  & & K'\wr G\sigma\\
%                    & \ruTo^{\Psi'} &\dTo_{\gamma\wr G\sigma}\\
% G                   &  \rTo_{\Psi} & K\wr G\sigma
%\end{diagram}
\xymatrix{& &K'\wr \rho(G)\ar[d]^{\gamma\wr \rho(G)} \\
 G \ar[rru]^{\Psi'} \ar[rr]_{\Psi} && K\wr \rho(G)}
\end{equation}
where $\Psi'$ is the map associated to the family $\mathscr F'$.
\end{Prop}
\begin{proof}
We begin by verify that
$\Psi_{\alpha}$ is a homomorphism.  Proposition~\ref{relatortypes}(2) implies
$y_{i,x}\in uG_{\alpha}u\inv\cap H$ so that $\Psi_{\alpha}$ makes
sense. Let $x_1,x_2\in G_{\alpha}$. Clearly,
$H_ix_1G_{\alpha}=H_ix_2G_{\alpha} = H_ix_1x_2G_{\alpha}=H_iG_{\alpha}$; set
$u=\alpha(H_iG_{\alpha})$.  From
\[(f_{x_1},\rho(x_1))(f_{x_2},\rho(x_2)) =
(f_{x_1}({}^{\rho(x_1)}\!{f_{x_2}}),\rho(x_1x_2))\] it follows that we just need
$f_{x_1}(H_i)f_{x_2}(H_ix_1) = f_{x_1x_2}(H_i)$.  Putting $H_j=H_ix_1$ and
$H_k=H_ix_1x_2$, an application of Proposition~\ref{relatortypes}(1)
yields
\begin{align*}
f_{x_1}(H_i)f_{x_2}(H_ix_1)&= \psi(z_{i,\alpha})\inv
\psi_u(y_{i,x_1})\psi(z_{j,\alpha})  \psi(z_{j,\alpha})\inv
\psi_u(y_{j,x_2})\psi(z_{k,\alpha})\\
%&= \psi(z_{i,\alpha})\inv
%\psi_u(y_{i,x_1}y_{j,x_2})\psi(z_{k,\alpha})\\
&= \psi(z_{i,\alpha})\inv \psi_u(y_{i,x_1x_2})\psi(z_{k,\alpha}) =
f_{x_1x_2}(H_i),
\end{align*}
as required.  The $\Psi_{\alpha}$ induce the desired map $\Psi$ by
the universal property of a free product.
 The commutativity
of \eqref{firstcommute} and \eqref{secondcommute} are immediate from
the definition of $\Psi_{\alpha}$ and the universal property of a
free product.
\end{proof}

As a corollary of the proposition and Lemma~\ref{naturality}, we obtain
\begin{Cor}\label{finalcommute}
Let $\Psi$, $\Psi'$ and $\gamma$ be as in
Proposition~\ref{existenceprop}.  Then there is a commutative
diagram
\begin{equation}\label{thirdcommute}
%\begin{diagram}  &                          & K'\wr H\sigma                    & \rTo^{\epsilon_{K'}} & K'\\
%                           & \ruTo^{\Psi'|_H} &\dTo_{\gamma\wr H\sigma}&                                &\dTo_{\gamma}\\
%                      H  & \rTo_{\Psi|_H}    & K\wr H\sigma                     & \rTo_{\epsilon_K}    & K
%\end{diagram}
\xymatrix{ && K'\wr \rho(H)  \ar[rr]^{\pi_{K'}}\ar[d]^{\gamma\wr \rho(H)} & &K'\ar[d]^{\gamma}\\
                  H\ar[rru] ^{\Psi'|_H}   \ar[rr]_{\Psi|_H}& & K\wr \rho(H)\ar[rr]_{\pi_K}& & K}
\end{equation}
\end{Cor}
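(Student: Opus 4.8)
The plan is to realize \eqref{thirdcommute} as the pasting of two commutative squares: a left-hand square asserting $(\gamma\wr\rho(H))(\Psi'|_H)=\Psi|_H$, and a right-hand square asserting $\pi_K(\gamma\wr\rho(H))=\gamma\,\pi_{K'}$. Gluing these along the arrow $\gamma\wr\rho(H)$ produces the whole diagram, since then $\gamma\,\pi_{K'}(\Psi'|_H)=\pi_K(\gamma\wr\rho(H))(\Psi'|_H)=\pi_K(\Psi|_H)$, which is exactly the outer commutativity being claimed, and the interior arrows of \eqref{thirdcommute} are precisely the edges of these two squares.

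First I would check that the restricted maps land where the diagram requires. By the commutativity of \eqref{firstcommute} we have $\theta\Psi=\rho$, and applying Proposition~\ref{existenceprop} to the family $\mathscr F'$ gives likewise $\theta'\Psi'=\rho$, where $\theta'\colon K'\wr\rho(G)\to\rho(G)$ is the projection. Hence for $h\in H$ we get $\theta\Psi(h)=\rho(h)\in\rho(H)$, so $\Psi(h)\in K^\Sigma\rtimes\rho(H)=K\wr\rho(H)$ by Proposition~\ref{elementarypropsofwreath}(a), and likewise $\Psi'(h)\in K'\wr\rho(H)$. Thus $\Psi|_H\colon H\to K\wr\rho(H)$ and $\Psi'|_H\colon H\to K'\wr\rho(H)$ are honest homomorphisms, and since $t_1=1$ (so $H^{t_1}=H$) the projections $\pi_K=\pi_{K,1}$ and $\pi_{K'}=\pi_{K',1}$, as well as the functoriality map $\gamma\wr\rho(H)$, are all defined on these groups.

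Next, the left-hand square is just the restriction of \eqref{secondcommute} to $H$. From \eqref{secondcommute} we have $(\gamma\wr\rho(G))\Psi'=\Psi$; since $\gamma\wr\rho(G)$ is the map $(f,g)\mapsto(\gamma f,g)$, its restriction to the subgroup $K'\wr\rho(H)$ coincides with $\gamma\wr\rho(H)$, and by the previous paragraph $\Psi'|_H$ already takes values in $K'\wr\rho(H)$. Therefore $(\gamma\wr\rho(H))(\Psi'|_H)=\Psi|_H$. The right-hand square is a direct instance of Lemma~\ref{naturality}(b) with $A=K'$, $B=K$, $\alpha=\gamma$ and $i=1$: that lemma gives the commuting square whose horizontal maps are $\gamma\wr\rho(H)$ and $\gamma$ and whose vertical maps are $\pi_{K',1}=\pi_{K'}$ and $\pi_{K,1}=\pi_K$. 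Assembling the two squares yields \eqref{thirdcommute}.

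There is essentially no obstacle here, since the corollary is an immediate consequence of Proposition~\ref{existenceprop} together with Lemma~\ref{naturality} and Proposition~\ref{elementarypropsofwreath}; the one point that requires a moment's care is the bookkeeping in the second paragraph, namely using \eqref{firstcommute} to verify that $\Psi|_H$ and $\Psi'|_H$ actually land in the wreath products over $\rho(H)$, so that $\pi_K$, $\pi_{K'}$, and the restricted functoriality map all make sense. Once this is in place the two squares glue formally.
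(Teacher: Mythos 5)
Your proof is correct and matches the paper's (implicit) argument exactly: the paper derives this corollary by pasting the restriction of \eqref{secondcommute} to $H$ with the naturality square of Lemma~\ref{naturality}(b), which is precisely your decomposition. Your extra bookkeeping via \eqref{firstcommute} to check that $\Psi|_H$ and $\Psi'|_H$ land in $K\wr\rho(H)$ and $K'\wr\rho(H)$ is the right justification for the step the paper leaves unstated.
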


Our next lemma is where we make use of the full strength of the
Kurosh system.   It is convenient
% to use the notation \[\Psi(g) = (f_g,\rho(g)),\] for $g\in G$. Observe that if $g,g'\in G$, then
% \begin{equation}\label{ribessuggestion}
% f_{gg'}= f_g({}^{\rho(g)}\!{f_{g'}})\quad \text{and}\quad f_{g^{-1}}=  ({}^{\rho(g^{-1})}\!{f_g})^{-1}.
% \end{equation}

\begin{Lemma}\label{effectontransversal}
Let $u = \alpha(Hu)$.  Then $f_u(H)=\psi(z_{Hu,\alpha})$.
\end{Lemma}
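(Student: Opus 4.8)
I would prove this by induction on the syllable length $\ell(u)$, exactly mirroring the recursive construction of the Kurosh system. The statement $f_u(H) = \psi(z_{Hu,\alpha})$ is really a statement about how $\Psi$ behaves on transversal elements, so the natural tool is Lemma~\ref{ribeslemma}(a), which computes $\tilde f_{g_1\cdots g_n}$ as a twisted product of the $\tilde f_{g_i}$, combined with the explicit formula for $f_x$ from Proposition~\ref{existenceprop}. The hypothesis $u = \alpha(Hu)$ is the key structural input: it says $u$ is the chosen transversal representative for its coset in $T_\alpha$, and via condition (i) of a Kurosh system this propagates to the double coset representative as well.

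**The base case and the inductive step.** For $\ell(u) = 0$ we have $u = 1$, and $f_1(H) = \psi(z_{1,\alpha})^{-1}\psi_{u'}(y_{1,x})\psi(z_{1,\alpha})$-type evaluation collapses: $z_{1,\alpha} = 1$ and $y_{1,1} = 1$, so $f_1(H) = 1 = \psi(z_{H,\alpha})$ since $z_{H,\alpha} = z_{1,\alpha} = 1$. For the inductive step, write $u = \bar u a$ in reduced form where $a \in G_\beta$ for some $\beta$, $\ell(\bar u) = \ell(u) - 1$, and the last syllable of $\bar u$ is not $\beta$. Since $u = \alpha(Hu)$, the prefix-type conditions of the Kurosh system (iv) together with (i) tell us $\bar u = \beta(H\bar u)$ — I would need to extract this carefully from how $\beta(HuG_\beta)$ and $\beta(Hu)$ were chosen in the existence proof, namely that $\beta(Hu) = u$ and $\beta(HuG_\beta) = \bar u$ when $\bar u$ plays the role of $t$ and $a$ the role of $b_\beta$. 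Then Lemma~\ref{ribeslemma}(a) applied to $u = \bar u \cdot a$ gives $f_u(H) = f_{\bar u}(H) \cdot {}^{\rho(\bar u)}\!f_a(H) = f_{\bar u}(H)\cdot f_a(H\bar u)$. By induction $f_{\bar u}(H) = \psi(z_{H\bar u,\beta})$ (using that $\bar u = \beta(H\bar u)$ so the relevant index for the inductive hypothesis is $\beta$), and by the definition of $f_a$ with $i$ indexing $H\bar u$, $j$ indexing $H\bar u a = Hu$, and the double-coset representative being $\bar u$ itself, we get $f_a(H\bar u) = \psi(z_{H\bar u,\beta})^{-1}\psi_{\bar u}(y_{H\bar u,a})\psi(z_{Hu,\beta})$. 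Since $y_{H\bar u,a} = \bar u a (\overline{\bar u a})^{-1} = \bar u a u^{-1}$, and $\bar u = \beta(H\bar u G_\beta)$ with $u = \beta(Hu)$ so that $\overline{\bar u a} = u$; by Proposition~\ref{relatortypes} or direct inspection $y_{H\bar u,a}$ lies in the appropriate $\bar u G_\beta \bar u^{-1}\cap H$, but more to the point, we need $\psi_{\bar u}(y_{H\bar u,a}) = 1$. This holds because $u = \bar u a$ means $y_{H\bar u,a} = \bar u a u^{-1} = 1$. Telescoping: $f_u(H) = \psi(z_{H\bar u,\beta})\cdot \psi(z_{H\bar u,\beta})^{-1}\cdot 1\cdot \psi(z_{Hu,\beta}) = \psi(z_{Hu,\beta})$. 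Finally, since $1 \neq u = \alpha(Hu)$ ends with a $\beta$-syllable, Proposition~\ref{relatortypes}(4) gives $z_{Hu,\alpha} = z_{Hu,\beta}$, so $f_u(H) = \psi(z_{Hu,\alpha})$, completing the induction.

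**The main obstacle.** The delicate point is keeping the bookkeeping straight between the three indices floating around — the coset $Hu$, the double-coset representative attached to it in each $T_\alpha$ versus $T_\beta$, and the element $\bar u$ — and in particular verifying that when $u = \alpha(Hu)$ ends in a $\beta$-syllable, the pair $(\bar u, a)$ is exactly the decomposition the Kurosh system's recursive construction used, so that $\bar u = \beta(H\bar u)$ and the inductive hypothesis applies at the index $\beta$ rather than $\alpha$. This is where conditions (i), (iv) and (v) of the Kurosh system all get used together, and it is the only genuinely nonroutine part; the wreath-product computation via Lemma~\ref{ribeslemma} is then just telescoping. I would also need a small remark that ${}^{\rho(\bar u)}\!f_a$ evaluated at $H$ equals $f_a$ evaluated at $H\bar u = H\rho(\bar u)$, which is immediate from the definition of the $G$-action on $K^\Sigma$.
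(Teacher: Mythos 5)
Your overall strategy --- induction on $\ell(u)$, peel off the last syllable $u=\bar u a$ with $a\in G_\beta$, telescope via Lemma~\ref{ribeslemma}, and convert from $\beta$ back to $\alpha$ with Proposition~\ref{relatortypes}(4) --- is exactly the paper's proof, merely packaged as one uniform computation instead of two cases. The computation itself is correct: once the identities $\bar u=\beta(HuG_\beta)$, $\beta(Hu)=u$ and $\bar u=\beta(H\bar u)$ are in hand, everything telescopes as you say.

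The one real issue is how you propose to justify those identities. You cannot ``extract them from how $\beta(HuG_\beta)$ and $\beta(Hu)$ were chosen in the existence proof'': the lemma is invoked in Theorem~\ref{kurosh} for an \emph{arbitrary} Kurosh system, which is given only by axioms (i)--(v) and carries no recursive construction. Nor do (i) and (iv) alone suffice, as you suggest at one point. The correct derivation forces precisely the dichotomy the paper uses. If $u\neq\alpha(HuG_\alpha)$, then (iii) gives $u=vx$ with $v=\alpha(HuG_\alpha)$ and $1\neq x\in G_\alpha$, while (ii) says $v$ is $1$ or ends in a syllable other than $\alpha$; uniqueness of normal form then forces the last syllable of $u$ to be $x$, so $\beta=\alpha$, $\bar u=v$, and the needed identities come from (i) and the hypothesis $u=\alpha(Hu)$. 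Note that in this case your final appeal to Proposition~\ref{relatortypes}(4) is not legitimate as stated --- its hypothesis is $u=\alpha(HuG_\alpha)$, not $u=\alpha(Hu)$ --- but it is also unnecessary, since $z_{Hu,\beta}=z_{Hu,\alpha}$ trivially. If instead $u=\alpha(HuG_\alpha)$, then (ii) forces $\beta\neq\alpha$, (iv) gives $\beta(Hu)=u$, and (ii) and (iii) applied at the index $\beta$ together with normal-form uniqueness give $\beta(HuG_\beta)=\bar u$, whence $\bar u=\beta(H\bar u)$ by (i); here Proposition~\ref{relatortypes}(4) applies legitimately. So the proof goes through, but the step you flagged as the only nonroutine one genuinely requires (ii) and (iii) and the case split on whether $u$ represents its double $\alpha$-coset --- which is exactly the paper's Case 1 / Case 2 organization.
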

\begin{proof}
We induct on the syllable length of $u$.  If $u=1$, there is nothing
to prove as $z_{H,\alpha}=1$ all $\alpha$.  So assume $u\neq 1$.
The proof divides into two cases.
\begin{Case} Assume $u\neq \alpha(HuG_{\alpha})$.  Then
 (iii) implies we can write $u=vx$
with \mbox{$v=\alpha(HuG_{\alpha})$} and $x\in G_{\alpha}$.  Moreover, $\ell(v)<\ell(u)$ by (ii).  Since $\alpha(Hv)=v$ by (i),
by induction $f_v(H) = \psi(z_{Hv,\alpha})$.  Then we find by Lemma~\ref{ribeslemma}
\begin{align*}
f_u(H) = f_v(H)f_x(Hv) &= \psi(z_{Hv,\alpha})
\psi(z_{Hv,\alpha})\inv\psi_v(y_{Hv,x})\psi(z_{Hu,\alpha})\\
&= \psi_v(y_{Hv,x})\psi(z_{Hu,\alpha}).
\end{align*}
But $y_{Hv,x} = \alpha(Hv)x\alpha(Hvx)\inv =
\alpha(Hv)x\alpha(Hu)\inv = vxu\inv =1$, establishing $f_u(H) =
\psi(z_{Hu,\alpha})$.
\end{Case}
\begin{Case} Suppose $u=\alpha(HuG_{\alpha})$.  Since
$u\neq 1$, (ii) implies $u$ ends in a syllable $\beta$ with
$\beta\neq \alpha$ and (iv) yields $\beta(Hu)=u$.  By (ii) $u\neq
\beta(HuG_{\beta})$, so Case 1 implies $f_u(H) = \psi(z_{Hu,\beta})$. Proposition 4.2(4) provides $z_{Hu,\beta}= z_{Hu,\alpha}$, so $f_u(H)=
\psi(z_{Hu,\alpha})$.
\end{Case}
This establishes the lemma.
\end{proof}

An important special case is when $K=H$ and the $\psi_u$ and $\psi$ are
the inclusions. Let us denote the induced map in this case by
$\til \Psi\colon G\to H\wr \rho(G)$.
\begin{Prop}\label{realembedding}
The map $\til \Psi\colon G\to H\wr \rho(G)$ is the standard wreath product embedding
associated to the transversal $T_{\alpha_0}$. Consequently,
$\pi_H\til \Psi|_{H}$ is the identity.
\end{Prop}
\begin{proof}
Writing $\til \Psi(g) = (F_g,\rho(g))$, if $x\in G_{\alpha}$ and
$H_ix=H_j$, then
\begin{align*}
F_x(H_i)  = z_{i,\alpha}\inv y_{i,x}z_{j,\alpha} &=
\alpha_0(H_i)\alpha(H_i)\inv [\alpha(H_i)x\alpha(H_j)\inv]
\alpha(H_j)\alpha_0(H_j)\inv\\ &= \alpha_0(H_i)x\alpha_0(H_ix)\inv.
\end{align*}
Thus $\til \Psi$ is the standard embedding associated to the transversal
$T_{\alpha_0}$.
\end{proof}

In the proof of the next theorem, we retain all the notation introduced in this section.

\begin{Thm}[Kurosh]\label{kurosh}Let $\{D_\alpha, T_\alpha\mid \alpha\in A\}$ be a Kurosh system for $H\leq G=
\bigast _{\alpha\in A} G_{\alpha}$.  Then \[H = \bigast_{\alpha\in
  A}\left[\bigast_{u\in D_{\alpha}}
\left(uG_{\alpha}u\inv\cap H\right
)\right] \mathrel{\ast} F\] and $F$ is a free group
with basis $Z$.
\end{Thm}
\begin{proof}
Let $\{\psi_u\colon uG_{\alpha}u\inv\cap H\to K\}_{\alpha\in A, u\in D_{\alpha}}$ be a family of
group homomorphisms and $\psi\colon Z\to K$ a map.  Let $\Psi\colon G\to K\wr
\rho(G)$ be as in Proposition~\ref{existenceprop}.   We show
$\pi_K\Psi|_{H}$ extends the $\psi_u$ and $\psi$ where $\pi_K= \pi_{K,1}$ is as in Lemma~\ref{naturality}. Suppose
$u=\alpha(HuG_{\alpha})$ and $h\in uG_{\alpha}u\inv\cap H$. By
Proposition~\ref{relatortypes}(3), $h= y_{i,x}$ for some $x\in
G_{\alpha}$, where $H_i=Hu$. Setting $H_j=H_ix$, an application of
Lemmas~\ref{effectontransversal} and~\ref{ribeslemma} (and the fact $H\alpha(H_i)=H_i$)
yields
\begin{align*}\pi_K\Psi(h)&= f_{y_{i,x}}(H)= f_{\alpha(H_i)x\alpha(H_j)^{-1}}(H)\\&
 = f_{\alpha(H_i)}(H)f_x
(H_i)\big(f_{\alpha(H_j)}(H_j\alpha(H_j)^{-1})\big)^{-1}\\
 &=f_{\alpha(H_i)}(H)f_x(H_i)(f_{\alpha(H_j)}(H))^{-1} \\
& =\psi(z_{i,\alpha})[\psi(z_{i,\alpha})^{-1} \psi_u(y_{i,x})
\psi(z_{j,\alpha})]\psi(z_{j,\alpha})^{-1} \\
 &= \psi_u(y_{i,x})= \psi_u(h).
 \end{align*}

Similarly, we calculate using Lemmas~\ref{effectontransversal} and~\ref{ribeslemma}
\begin{align*}
\pi_K\Psi(z_{i,\alpha})&=f_{z_{i,\alpha}}(H) =
f_{\alpha(H_i)}(H)\big(f_{\alpha_0(H_i)}(H_i\alpha_0(H_i)\inv)\big)\inv
\\ &= f_{\alpha(H_i)}(H)(f_{\alpha_0(H_i)}(H))\inv \\ &= \psi(z_{i,\alpha})
\psi(z_{i,\alpha_0})\inv   = \psi(z_{i,\alpha})
\end{align*}
since $z_{i,\alpha_0}=1$.

The uniqueness of $\pi_K\Psi|_{H}$ follows from the
functoriality of our construction. Namely, in
Proposition~\ref{existenceprop} take $K'=H$ and $\psi'_u$, $\psi'$ the
inclusions (and so $\Psi'\colon G\to H\wr \rho(G)$ is $\til \Psi$ from Proposition~\ref{realembedding}). Suppose
$\gamma\colon H\to K$ is an extension of the $\psi_u$ and $\psi$. Then
\eqref{firstcommuteb} commutes and so diagrams \eqref{secondcommute}
and \eqref{thirdcommute} commute. Since $\pi_H\Psi'|_{H}=\pi_H\til{\Psi}|_H$
is the identity in this case by Proposition~\ref{realembedding}, we conclude
$\gamma=\pi_K\Psi|_{H}$.
\end{proof}

\begin{Rmk}
As we mentioned earlier, there is a close relationship between the standard embedding and induced representations~\cite{wielandt}.  From this viewpoint, our proof of the Kurosh Theorem has a similar flavor to Mackey's Theorem on the restriction to one subgroup of a representation induced from another.
\end{Rmk}

\subsection{The Kurosh Subgroup Theorem for Profinite Groups}
Let $\Gamma$ be a pro-$\mathscr C$ group and let
$\{\Gamma_\alpha \mid \alpha \in A\}$ be a collection of pro-$\mathscr C$ groups indexed by a set $A$. For
each $\alpha\in A$, let $\iota_\alpha\colon \Gamma_\alpha\rightarrow \Gamma$ be a
 continuous homomorphism. One says that the family $\{\iota_\alpha\mid
\alpha\in A\}$ is \emph{convergent} if whenever $U$  is an open
neighborhood of $1$ in $\Gamma$, then $U$ contains all but a finite number of
the images $\iota_\alpha(\Gamma_\alpha)$. We say that $\Gamma$ together with the $\iota_\alpha$
is the \emph{free pro-$\mathscr C$ product} of the groups $\Gamma_\alpha$ if the following universal
property is satisfied: whenever $K$ is a pro-$\mathscr C$ group and
$\{\lambda_\alpha\colon \Gamma_\alpha \rightarrow K\mid \alpha\in A\}$ is a
convergent family of continuous homomorphisms, then there exists a unique
continuous homomorphism $\lambda\colon \Gamma\rightarrow K$ such that
\[\xymatrix{\Gamma_{\alpha}\ar[r]^{\iota_\alpha}\ar[rd]_{\lambda_\alpha} &\Gamma\ar@{.>}[d]^{\lambda}\\ & K}\]
commutes, for all $\alpha\in A$. We denote such a free pro-$\mathscr C$
product by
$\Gamma=
\coprod_{\alpha\in A}\Gamma_\alpha $. Free pro-$\mathscr C$ products exist and are
unique. To construct the free pro-$\mathscr C$ product $\Gamma$ one proceeds as follows: let  $G=
\bigast_{\alpha\in A}\Gamma_\alpha$ be the free product of the groups $\Gamma_\alpha$ as abstract groups.
Consider the pro-$\mathscr C$ topology on $G$ determined by the collection of normal subgroups $N$ of
finite index in
$G$ such that $G/N\in \mathscr C$, $N\cap \Gamma_\alpha$ is open in $\Gamma_\alpha$, for each $\alpha\in
A$,  and $N\ge \Gamma_\alpha$, for all but finitely many   $\alpha$.  Then
\[\Gamma= \ilim_N G/N.\]
It turns out that $G$ is naturally embedded in $\Gamma$ as a dense subgroup. One can take the
homomorphism $\iota_\alpha$ to be the composition of inclusions \[\Gamma_\alpha\hookrightarrow
G\hookrightarrow \Gamma\quad  (\alpha\in A).\]  If the set $A$ is finite, the `convergence' property of
the homomorphisms $\iota_\alpha$ is automatic.

For such free products, one has the following analogue of
the Kurosh Subgroup Theorem~\cite{Wenzel}.

\begin{Thm} Let $H$ be an open subgroup of the free pro-$\mathscr C$ product
\[\Gamma= \coprod_{\alpha\in A}\Gamma_\alpha.\]
Then, for each $\alpha\in A$, there exists a set  $D_\alpha$ of representatives  of the double cosets
$H\backslash \Gamma/\Gamma_\alpha$ such that the family of inclusions \[\{u\Gamma_\alpha u^{-1}\cap
H\hookrightarrow H\mid u\in D_\alpha, \alpha\in A\}\] converges, and  $H$ is the free pro-$\mathscr C$
product
\[H= \left[\coprod_{\alpha\in A, u\in D_\alpha} u\Gamma_\alpha u^{-1}\cap H\right]\amalg \Phi,\]
where $\Phi$ is a free pro-$\mathscr C$ group of finite rank.
\end{Thm}
\begin{proof}
First we show that we may assume that $A$ is finite. Consider the core $H_\Gamma= \bigcap_{g\in
\Gamma}gH g^{-1}$ of $H$ in $\Gamma$. Since  $H$ is open, we have  that
$H_\Gamma$ is open in $\Gamma$. So there exists a finite subset $B$ of $A$ such that $A_\alpha\le
H_\Gamma$ for all $\alpha\in A-B$. Put $\Gamma' = \coprod_{\alpha\in A-B}\Gamma_\alpha $; then
\[\Gamma=  \left[\coprod _{\alpha\in  B}\Gamma_\alpha\right]\amalg \Gamma'\]
is a free pro-$\mathscr C$ product of finitely many factors, and one easily sees that it suffices to
prove the theorem for this product. Indeed,
observe first that for all $\alpha\in A-B$, $H_\Gamma\ge \Gamma_\alpha$  and since
$H_\Gamma\lhd
\Gamma$, one has $Hu\Gamma_\alpha= Hu= Hu\Gamma'$   $(u\in \Gamma)$, i.e., $H\backslash
\Gamma/\Gamma'=
 H\backslash \Gamma= H\backslash \Gamma/\Gamma_\alpha$; on the other hand,
 \[u\Gamma'u^{-1}\cap H= u\Gamma'u^{-1}= \coprod_{\alpha\in A-B}u\Gamma_\alpha u^{-1}=
\coprod_{\alpha\in A-B}(u\Gamma_\alpha u^{-1}\cap H).\]  Hence from now on we assume that $A$ is a finite indexing set.

Choose a Kurosh system $\{D_\alpha, T_\alpha \mid \alpha\in A\}$ for the subgroup
$G\cap H$ of the abstract free product $G= \bigast_{\alpha\in A}\Gamma_\alpha$, and observe that, for each
$\alpha$,
$T_\alpha$ and
$D_\alpha$  are systems of representatives of the cosets $H\backslash G$ and of the double
cosets
$H\backslash \Gamma /\Gamma_\alpha$, respectively. The remainder of the proof can be carried out \textit{mutatis mutandis} as is done in the proof of Theorem~\ref{kurosh}
  (one simply has to require initially that the
homomorphisms $\psi_u$  are continuous, and then verify that all the
maps involved in  the proof are also continuous; this is an easy consequence of our comments in \ref{profiniteversion}).
\end{proof}

Let us point out that this proof is independent of the result for abstract free products (Theorem~\ref{kurosh}); it
simply follows the same procedure.

%Finally we observe that the same procedure can be used to prove a Kurosh type of theorem
%for free pro-$\mathscr C$ products of pro-$\mathscr C$ groups indexed by a profinite space, if we assume in
%addition that the indexing is locally constant (cf. D. Gildenhuys and L. Ribes, `A Kurosh subgroup
%theorem for free pro-$\mathscr C$ products of pro-$\mathscr C$ groups', Trans. AMS, {\bf 186} (1973)
%309-329).
 We leave open the question of whether or not the same simple procedure works in case  one deals
with   pro-$\mathscr C$ products of pro-$\mathscr C$ groups indexed by a profinite space~\cite{zalesskiifree}.

\section{Quasifree Profinite Groups}
%\bigskip
This section contains a simpler proof of the main result of~\cite{quasifree}. A similar approach, using the twisted wreath product, was independently discovered by Bary-Soroker \textit{et al.}~\cite{lior}.

\subsection{Quasifree Groups}
An epimorphism of groups is termed \emph{proper} if it is not an
isomorphism. Let  $\mathscr C$ be an extension closed
variety of finite groups and let $m$ be  an infinite cardinal
number. A pro-${\mathscr C}$  group $G$ is called \emph{$m$-quasifree}
if whenever $A$  and  $B$  are  groups in ${\mathscr C}$, $\alpha
\colon    A\rightarrow B$ is a proper epimorphism of groups that
splits (i.e., there is a section $\sigma \colon  B\rightarrow A$ of
$\alpha$: $\alpha \sigma= {\rm id}_B$ ), and $\beta\colon G\rightarrow
B$ is a continuous  epimorphism,
\[\xymatrix{&&G\ar@{->>}[d]^\beta\ar@{..>>}[lld]_\lambda \\
  A\ar@{->>}[rr]^\alpha&&B\ar@/^/[ll]^\sigma}\] then there exist
precisely $m$ different continuous epimorphisms $\lambda\colon
G\rightarrow A$ such that  $\alpha \lambda= \beta$. See~\cite{quasifree,harbater} for
motivation and elementary properties of these groups; one knows in
particular that the minimal number $d(G)$ of generators converging to
$1$ of such an $m$-quasifree group $G$ is $m$ (see~\cite[Lemma 1.2]{quasifree}). In~\cite{quasifree} it is proved that open subgroups of $m$-quasifree groups are
$m$-quasifree. Here we provide a simpler and more natural proof of
this result by means of wreath products.

\begin{Thm}   Let $G$ be an $m$-quasifree pro-${\mathscr C}$ group, and let $H$ be an open subgroup of $G$. Then $H$ is $m$-quasifree.
\end{Thm}
\begin{proof}
Given  $A, B\in {\mathscr C}$, a proper  split epimorphism $\alpha
\colon A \rightarrow B$ and a continuous epimorphism $\beta\colon
H\rightarrow B$, we need to  prove the existence of exactly $m$
continuous  epimorphisms $\lambda\colon  H\rightarrow A$ such that
$\alpha \lambda=\beta$.

   Set $\Sigma= H\backslash G$ and let
$\rho\colon  G\rightarrow S_\Sigma$ be the corresponding permutation
representation as in Section~\ref{section1}.  Consider the  standard
embedding \[\varphi\colon  G\rightarrow H\wr \rho (G)\] constructed in
Theorem~\ref{embeddingtheorem}. Note that $\alpha\wr \rho (G)\colon
A\wr \rho (G)\rightarrow B\wr \rho (G)$ is   a split proper epimorphism by
Proposition~\ref{elementarypropsofwreath}; observe also that  $A\wr
\rho (G)$ and $B\wr \rho (G)$ are finite groups in ${\mathscr C}$, as
${\mathscr C}$ is extension closed.    Let $B' = (\beta\wr
\rho(G))\varphi(G)$ and $A' = (\alpha\wr \rho(G))^{-1}(B')$.  Then
$A',B'\in {\mathscr C}$,  and the restriction $\alpha'\colon
A'\rightarrow B'$ of $\alpha\wr \rho (G)$ to $A'$ is a split proper
epimorphism.  See Figure~\ref{unnamed}.
\begin{figure}[h!b!t!p!]
 \[\xymatrix{ A\wr \rho (G)\ar@{->>}[rr]^{\alpha\wr \rho
    (G)}&& B\wr \rho (G)\\
  A'\ar@{^{(}->}[u]\ar[rr]^{\alpha'}&&B'\ar@{^{(}->}[u]\\
  &&H\wr \rho(G)\ar[u]_{\beta\wr \rho (G)}\\
  &&G\ar[u]_\varphi\ar@{.>}[uull]^{\widetilde \lambda}}\]
\caption{A commutative diagram\label{unnamed}}
\end{figure}
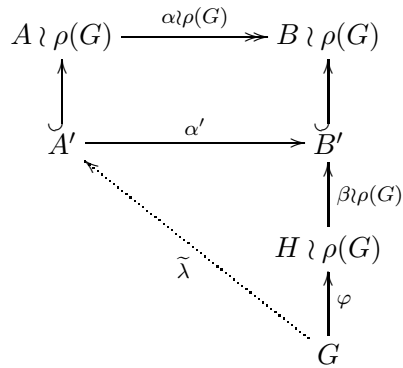
Since $G$
is $m$-quasifree, there exists   a continuous   epimorphism
$\widetilde \lambda\colon  G\rightarrow A'$ such that $\alpha'
\widetilde \lambda = (\beta\wr \rho (G))\varphi$. Then,  for each
$g\in G$, $\widetilde \lambda(g)= (\widetilde f_g, \rho(g))$, for some
$\widetilde f_g\in A^\Sigma$.

Let $T=\{t_1=1, t_2, \dots , t_k\}$ be a right transversal of $H$ in $G$.  For $i=1, \dots, k$, define    $\lambda_i\colon  H^{t_i}\rightarrow A$ to be $\lambda_i= \pi_{A,i}\widetilde \lambda|_{H^{t_i}}$, i.e., $\lambda_i (x)=\widetilde f_x(Ht_i)$, for $x\in H^{t_i}$.
According to Lemma~\ref{naturality}, the diagram
 %in Figure~\ref{commutediagram}
%\begin{figure}[h!b!t!p!]
 \[\xymatrix{ &A\ar@{->>}[rr]^\alpha &&B \\      A\wr \rho (H^{t_i})\ar@{->>}[rr]^{\alpha\wr \rho (H^{t_i})}\ar@{->>}[ru]^{\pi_{A,i}}&& B\wr \rho(H^{t_i})\ar@{->>}[ru]^{\pi_{B,i}}\\                                              A^\Sigma\ar@{^{(}->}[u]^\iota&&H\wr \rho (H^{t_i})\ar@{->>}[r]^(.6){\pi_{H,i}}\ar[u]_{\beta\wr \rho(H^{t_i})}&H\ar@{->>}[uu]_\beta\\    \\                                                    &&H^{t_i}\ar[uu]_(.6){\varphi|_{H^{t_i}}}\ar[uuull]_(.6){\widetilde \lambda|_{H^{t_i}}}\ar[uur]_{\mathrm {inn}_{t_i}|_{H^{t_i}}} \\  &&H_G\ar@{_{(}->}[u]\ar[uuull]^{\widetilde\lambda|_{H_G}}} \]
 %\caption{A commuting diagram\label{commutediagram}}
%\end{figure}
commutes.  Thus $\beta\circ \mathrm{inn}_{t_i}|_{H^{t_i}}= \alpha\lambda_i.$

We claim that $\lambda_i$ is surjective.  Let $a\in A$ and let
$b=\alpha(a)$.  Since $\beta$ is surjective, the commutativity of
the above diagram ensures that there exists $(f,\rho(x)))\in
(B\wr \rho(H^{t_i}))\cap B'$, where $x\in H^{t_i}$, with
$f(Ht_i)=b$.  Choose  $f'\colon \Sigma\to A$ to be  any function so
that $f'(Ht_i)=a$ and $\alpha f'=f$; then $(f',\rho(x))\in A'$.
Therefore, $\pi_{A,i}$ takes $A'\cap (A\wr \rho(H^{t_i}))$ onto
$A$. Because \mbox{$\ker \widetilde \lambda\le \ker \rho=H_G \le
H^{t_i}$}, it follows that  $\widetilde \lambda(g)\in A\wr
\rho(H^{t_i})$ implies $g\in H^{t_i}$.  We deduce that  $\widetilde
\lambda|_{H^{t_i}}\colon H^{t_i}\to A'\cap (A\wr \rho(H^{t_i}))$ is
an epimorphism, and hence so is $\lambda_i$, proving the claim.

Since $G$ is quasifree, the total number of epimorphisms $\widetilde
\lambda\colon G\rightarrow A'$  such that $ \alpha' \widetilde \lambda
= (\beta\wr \rho (G))\varphi$ is $m$.   Since $H_G=
\bigcap_{i=1}^kH^{t_i}$ has finite index in $G$,  these   $\widetilde
\lambda$ restrict to $m$ different homomorphisms
\[\widetilde\lambda|_{H_G} \colon H_G\rightarrow A\wr\rho(H_G)=
A^\Sigma.\]
Recalling from Lemma~\ref{naturality} that the $\pi_{A,i}\colon
A^{\Sigma}\to A$ ($i=1,\ldots, k$) are the direct product
projections, we conclude that $\widetilde\lambda|_{H_G}$ is determined
by the maps
$\pi_{A,i}\widetilde\lambda|_{H_G}=\lambda_i|_{H_G}$, $i=1, \dots ,
k$. It follows that there exists some $j\in \{1, \dots , k\}$, such
that the number of different maps $\lambda_j|_{H_G}$ constructed in
this manner
is precisely $m$.

For each of these $\lambda_j$, define $\lambda= \lambda_j\circ
\mathrm{inn}_{t_j^{-1}}|_{H}$.  Then, since $H_G$ has finite index in
$H$, we have constructed  $m$
different epimorphisms $\lambda\colon H\rightarrow A$  such that
$\alpha\lambda = \beta$. Finally, observe that there cannot be more
such $\lambda$ since  the minimal number $d(H)$ of generators of $H$
converging to $1$ is $m$  and $A$ is finite.  This completes the
proof.
\end{proof}

\bibliographystyle{amsplain}
\bibliography{standard2}

\end{document}